\newcommand{\F}{\mathbb{F}}
\numberwithin{equation}{section}
\theoremstyle{plain} 
\newtheorem{thm}[equation]{Theorem}
\newtheorem{prop}[equation]{Proposition}
\theoremstyle{definition}
\theoremstyle{remark}
\newtheorem{rem}[equation]{Remark}
\begin{document}

\title[On the constant $\protect D(q)$ defined by Homma]{On the constant $\protect D(q)$ defined by Homma}

\author{Peter Beelen} \address{Department of Applied Mathematics and Computer Science, Technical University of Denmark, Kongens Lyngby 2800, Denmark} \email{pabe@dtu.dk} \thanks{}
\author{Maria Montanucci} \address{Department of Applied Mathematics and Computer Science, Technical University of Denmark, Kongens Lyngby 2800, Denmark} \email{marimo@dtu.dk} \thanks{}
\author{Lara Vicino} \address{Department of Applied Mathematics and Computer Science, Technical University of Denmark, Kongens Lyngby 2800, Denmark}  \email{lavi@dtu.dk} \thanks{}

\address{}

\date{}

 \begin{abstract}
Let $\mathcal{X}$ be a projective, irreducible, nonsingular algebraic curve over the finite field $\mathbb{F}_q$ with $q$ elements and let $|\mathcal{X}(\mathbb{F}_q)|$ and $g(\mathcal X)$ be its number of rational points and genus respectively. The Ihara constant $A(q)$ has been intensively studied during the last decades, and it is defined as the limit superior of $|\mathcal{X}(\mathbb{F}_q)|/g(\mathcal X)$ as the genus of $\mathcal X$ goes to infinity. In \cite{homma2012bound} an analogue $D(q)$ of $A(q)$ is defined, where the nonsingularity of $\mathcal X$ is dropped and $g(\mathcal X)$ is replaced with the degree of $\mathcal X$. We will call $D(q)$ Homma's constant. In this paper, upper and lower bounds for the value of $D(q)$ are found.
\end{abstract}

\maketitle

\vspace{0.5cm}\noindent {\em Keywords}:
Algebraic curve, rational point, finite field

\vspace{0.2cm}\noindent{\em MSC}:
14G15, 14H50, 11G20, 14H25
\vspace{0.2cm}\noindent

\section{Introduction}

Let $p$ be a prime and let $q = p^e$ be a prime power. Let $\mathcal{X}$ be a projective, nonsingular, geometrically irreducible curve of genus $g$. The interaction between the genus $g$ of $\mathcal{X}$ and the number $|\mathcal{X}(\mathbb{F}_q)| $ of its rational points has been subject of intense studies during the last years. It is well known that the Weil bound
$$|\mathcal{X}(\mathbb{F}_q)| \leq q + 1 + 2g\sqrt{q}$$
is not sharp if $g$ is large compared to $q$.
Put
\begin{equation} \label{max}
N_q(g) := \max|\mathcal{X}(\mathbb{F}_q)|,
\end{equation}
where the maximum is taken over all curves $\mathcal{X}/\mathbb{F}_q$ with genus $g$. The \textit{Ihara constant} is defined by
\begin{equation} \label{IharaC}
A(q) := \limsup_{g\rightarrow \infty} \frac{N_q(g)}{g}.
\end{equation}
This is a measure of the asymptotic behaviour of the number of rational points on curves over $\mathbb{F}_q$ when the genus becomes large.
Ihara's constant $A(q)$ has been intensively studied during the last decades. For any $q$, we have $A(q) \leq \sqrt{q}-1$ (see \cite{DV}), and if $q$ is a square we have (see \cite{Ihara,TVZ}) $A(q) = \sqrt{q}-1$.

For any $q$, using class field theory, Serre \cite{Serre} showed that $A(q)> c \log(q)$ for some constant $c >0$ independent of $q$. In particular $A(q)>0$ for all $q$. For $q=p^{2m+1}$, with $m>0$, the currently best-known lower bound is $A(q) \ge 2(1/(p^m-1)+1/(p^{m+1}-1))^{-1}$, see \cite{BBGS}. The exact value of $A(q)$ is however unknown when $q$ is not a square.

If the curve $\mathcal{X}$ is seen as a projective curve $\mathcal{X} \subseteq \mathbb{P}^n(\mathbb{F}_q)$ of degree $d>0$ and it is not necessarily required to be nonsingular, a different question can be addressed: how large can $|\mathcal{X}(\mathbb{F}_q)|$ be with respect to $d$?

In a series of papers \cite{H1,H2,H3} it has been shown that if $\mathcal{X}$ is a (possibly reducible) plane curve without $\mathbb{F}_q$-linear components, then
\begin{equation} \label{Sziklai}
|\mathcal{X}(\mathbb{F}_q)| \leq  (d - 1)q + 1,
\end{equation}
except for curves isomorphic over $\mathbb{F}_4$ to the curve defined by
$$\mathcal{K} : (X + Y + Z)^4 + (XY + Y Z + ZX)^2 + XYZ(X + Y + Z) = 0,$$
which satisfies $|\mathcal{K}(\mathbb{F}_4)| = 14$. The bound \eqref{Sziklai} was originally conjectured by Sziklai \cite{SZI}, and he found that some curves actually achieve this bound.

The natural question on whether the bound \eqref{Sziklai} is valid for curves in higher dimensional projective space $n \geq 3$ was analyzed by Homma in \cite{homma2012bound}. There, it is obtained that \eqref{Sziklai} is also true when $n \geq 3$ and $\mathcal{X}$ has no $\mathbb{F}_q$-linear components, unless $d=q=4$ and $\mathcal{X}$ is $\mathbb{F}_q$-isomorphic to the plane curve $\mathcal{K}$.

In the same paper \cite{homma2012bound}, an analogue of Ihara constant $A(q)$ \eqref{IharaC} is given when replacing the genus $g$ with the degree $d$. First, we replace $N_q(g)$ as defined in \eqref{max}, with $M_q(d):=\max |\mathcal{X}(\mathbb{F}_q)|$ where this time the maximum is taken over all irreducible curves of a fixed degree $d$ in a projective space of some dimension over $\mathbb{F}_q$. Here the dimension is not fixed and therefore allowed to be arbitrarily large. Then the analogue of $A(q)$ is defined as
\begin{equation} \label{Dq}
D(q) := \limsup_{d \rightarrow \infty}\frac{M_q(d)}{d},
\end{equation}
which measures the asymptotic behavior of the number of rational points of projective curves over $\mathbb{F}_q$ when $d$ becomes large. In \cite{homma2012bound} it was observed that since the bound \eqref{Sziklai} is valid for curves in any projective space $\mathbb{P}^n(\F_q)$, $n \geq 2$, with the exception already mentioned above, one may conclude that $D(q) \le q$. In the same paper also the lower bound $D(q) \ge A(q)/2$ was derived, but the exact value of $D(q)$ remains unknown for all $q$.

In this paper, new upper and lower bounds for the value of $D(q)$, which we from now on will call Homma's constant, are found by a refinement of Homma's methods and by using towers of algebraic function fields.
Our main results are summarized in the following theorem.
\begin{thm} \label{main}
Let $q=p^e$ be a prime power and let $D(q)$ be Homma's constant as defined in \eqref{Dq}. Then
\begin{enumerate}
\item $D(q) \leq q-1$,
\item $D(q) \geq 1$ provided that $q > 2$,
\item $D(q^2) \geq \frac{q}{q+1}A(q^2) = \frac{q^2-q}{q+1}$.
\end{enumerate}
\end{thm}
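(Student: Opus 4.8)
I focus on part (3), the deepest of the three. Since $q^2$ is a square, $A(q^2)=\sqrt{q^2}-1=q-1$ by \cite{Ihara,TVZ}, so that $\frac{q}{q+1}A(q^2)=\frac{q^2-q}{q+1}$; and since $D(q^2)$ is defined as a $\limsup$ over the degree, it suffices to exhibit geometrically irreducible projective curves $\mathcal X_n/\mathbb F_{q^2}$ with $\deg\mathcal X_n\to\infty$ and
\[
\liminf_{n\to\infty}\frac{|\mathcal X_n(\mathbb F_{q^2})|}{\deg\mathcal X_n}\ \ge\ \frac{q^2-q}{q+1}.
\]
The rational points will be supplied by an asymptotically optimal tower: fix a tower $(F_n)_{n\ge 1}$ of function fields over $\mathbb F_{q^2}$ attaining the Drinfeld--Vl\u adu\c t bound (e.g.\ a Garcia--Stichtenoth tower), with smooth projective models $\mathcal Y_n$ of genus $g_n\to\infty$ and $N_n:=|\mathcal Y_n(\mathbb F_{q^2})|$ satisfying $N_n/g_n\to q-1$.

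The plan is to realize each $\mathcal Y_n$ as a projective curve of degree $d_n$ with $d_n/g_n\to\frac{q+1}{q}$ and with essentially no loss of rational points. Concretely one looks for a base-point-free $\mathbb F_{q^2}$-rational divisor $G_n$ on $\mathcal Y_n$ of degree $d_n$ such that the morphism $\varphi_{G_n}\colon\mathcal Y_n\to\mathbb P^{\ell(G_n)-1}$ is birational onto its image $\mathcal X_n$ and is injective on $\mathcal Y_n(\mathbb F_{q^2})$ outside a subset of cardinality $o(g_n)$. Then $\mathcal X_n$ is a geometrically irreducible projective curve of degree $d_n$; and since every rational point of $\mathcal Y_n$ maps to a rational point of $\mathcal X_n$, distinct points having distinct images by the injectivity, we get $|\mathcal X_n(\mathbb F_{q^2})|\ge N_n-o(g_n)$ (if $G_n$ can even be chosen very ample, then $\mathcal X_n$ is smooth and $|\mathcal X_n(\mathbb F_{q^2})|=N_n$). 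It follows that
\[
\liminf_{n}\frac{|\mathcal X_n(\mathbb F_{q^2})|}{d_n}\ \ge\ \lim_{n}\frac{N_n-o(g_n)}{d_n}\ =\ \frac{q-1}{(q+1)/q}\ =\ \frac{q^2-q}{q+1},
\]
which is the desired bound.

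The whole argument thus reduces to producing, for each $n$, a base-point-free divisor $G_n$ of degree only $\frac{q+1}{q}g_n(1+o(1))$ separating all but $o(g_n)$ of the points of $\mathcal Y_n(\mathbb F_{q^2})$ together with their tangent directions; this is where the real work lies. The trivial input ``every divisor of degree $\ge 2g_n-1$ is very ample'' only yields $d_n/g_n\to 2$, hence the weaker estimate $D(q^2)\ge A(q^2)/2$ already noted by Homma, and beating it forces one to use the arithmetic of the tower. One approach is a dimension-and-counting argument over $\operatorname{Pic}(\mathcal Y_n)$: for fixed $P,P'\in\mathcal Y_n(\mathbb F_{q^2})$, the locus of degree-$d$ classes $[G]$ for which $\varphi_G$ fails to separate $P$ and $P'$ equals the translate $[K_{\mathcal Y_n}+P+P']-W_{2g_n-d}(\mathcal Y_n)$, of dimension $2g_n-d$ (with $W_m$ the image of $\operatorname{Sym}^m\mathcal Y_n$ in $\operatorname{Pic}^m$), and there are analogous bad loci for tangency and for non-birationality. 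Estimating the number of $\mathbb F_{q^2}$-points of these $\binom{N_n}{2}+N_n+O(1)$ loci against $|\operatorname{Pic}^{d}(\mathcal Y_n)(\mathbb F_{q^2})|$ --- whose exponential rate in $g_n$ is controlled by the zeta functions along the tower --- shows that a rational class avoiding all of them exists as soon as $d\ge\frac{q+1}{q}g_n(1+o(1))$, and $\frac{q}{q+1}$ is exactly the break-even point of the competition between the $\approx g_n^2$ separation conditions and the size of $\operatorname{Pic}^{d}$.

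A more hands-on alternative uses the place $P_\infty$ totally ramified in the tower: its Weierstrass semigroup is generated by products and powers of $q$ and $q+1$ coming from the defining equations of the tower, and this lets one write down an explicit divisor supported near $P_\infty$ of the required degree, the factor $\frac{q+1}{q}$ once more reflecting the ``$q$ versus $q+1$'' in each Artin--Schreier step. The main obstacle, on either route, is precisely this degree reduction --- from the trivial $2g_n$ down to $\frac{q+1}{q}g_n$ --- while retaining almost all of the rational points. The two points I expect to require care are: (i) the estimates for $|\operatorname{Pic}^{d}(\mathcal Y_n)(\mathbb F_{q^2})|$ and $|W_m(\mathcal Y_n)(\mathbb F_{q^2})|$ along the specific tower, equivalently sufficient control of its zeta functions; and (ii) checking that the --- necessarily numerous --- singular points of $\mathcal X_n$ cost no rational points, being either defined over a proper extension of $\mathbb F_{q^2}$ or, when rational, carrying branches conjugate over $\mathbb F_{q^2}$.
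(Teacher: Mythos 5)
Your proposal addresses only Item 3; Items 1 and 2 (the refined upper bound $D(q)\le q-1$ obtained by letting the ambient dimension $n\to\infty$ in Homma's bound for nondegenerate curves, and the explicit complete-intersection curves giving $D(q)\ge 1$) are not touched at all, so even at best this is a partial proof. For Item 3 you have correctly identified the shape of the argument --- an optimal Garcia--Stichtenoth tower over $\F_{q^2}$ and projective models of degree $d_m\sim\frac{q+1}{q}g_m$ retaining essentially all rational points --- and your second, ``hands-on'' route via the Weierstrass semigroup at the totally ramified place $P_\infty$ is in fact the paper's route. But the proposal stops exactly where the proof has to start: you never establish the quantitative fact from which the constant $\frac{q}{q+1}$ comes. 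The paper's argument rests on the Pellikaan--Stichtenoth--Torres computation $H(P_\infty^{(m)})=qH(P_\infty^{(m-1)})\cup\mathbb{Z}_{\ge c_m}$ with conductor $c_m=q^m-q^{\lceil m/2\rceil}$; since the smallest positive element is $q^{m-1}$, the semigroup is generated by its elements below $c_m+q^{m-1}$, and mapping to $\mathbb{P}^{\ell}$ by functions realizing a full set of generators (following Saints--Heegard) gives an irreducible curve of degree at most $c_m+q^{m-1}-1\sim q^{m-1}(q+1)$ carrying at least $(q-1)q^m$ rational points, whence the ratio $\frac{q^2-q}{q+1}$. Your description of the semigroup as ``generated by products and powers of $q$ and $q+1$'' is not accurate and does not by itself yield the needed bound on the largest generator; without that bound the factor $\frac{q}{q+1}$ is unproven.

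Two further points. Your alternative route via counting in $\operatorname{Pic}^d$ is asserted rather than argued: the claim that $d=\frac{q+1}{q}g_n(1+o(1))$ is ``exactly the break-even point'' is reverse-engineered from the answer, and a generic divisor-class count that knows nothing about the specific tower has no reason to produce this tower-specific constant, which in the paper arises from the arithmetic of the one-point semigroup ($c_m+q^{m-1}\sim\frac{q+1}{q}\,g(F_m)$). Second, your worry about ``necessarily numerous'' singular points does not arise in the correct construction: because the definition of $D(q)$ places no restriction on the ambient dimension, one maps by \emph{all} the generators $f_1,\dots,f_\ell$ of the semigroup at $P_\infty^{(m)}$, and then (by the Saints--Heegard theorem the paper invokes) the image curve has function field $F_m$ and at most one singular point, namely the image of $P_\infty^{(m)}$; hence rational places and rational points correspond bijectively away from a single point and no rational points are lost. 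In short, the strategy is the right one, but the two load-bearing inputs --- the semigroup computation bounding the degree and the one-point-map construction controlling the singularities --- are missing, and Items 1 and 2 are absent entirely.
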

Note that the lower bound $D(q) \ge 1$ is interesting for small values of $q$ only, since otherwise Homma's lower bound $D(q) \ge A(q)/2$ is better. The values $q \le 31$ for which the lower bound $D(q) \ge 1$ is currently the best known are listed in Remark 4.6.

The paper is organized as follows. We start by slightly improving Homma's upper bound on $D(q)$ in Section \ref{UB} by refining his argument, thus proving Item 1 of Theorem \ref{main}. Next we prove Item 2 of Theorem \ref{main} in Section \ref{LB} by explicitly constructing a sequence of curves whose degrees are close to their number of rational points. Finally, the main part of the paper is devoted to proving Item 3 of Theorem \ref{main} in the final section.

\section{An upper bound for $D(q)$: the proof of Item 1 in Theorem \ref{main}} \label{UB}

The upper bound $D(q) \leq q$ obtained by Homma in \cite{homma2012bound}*{Proposition 5.4} was deduced from the bound \eqref{Sziklai}, but in the same paper the following theorem was given.
\begin{thm}[\cite{homma2012bound}*{Theorem 3.2}]
Let $\mathcal{X}$ be a nondegenerate irreducible curve of degree $d$ in $\mathbb{P}^n(\F_q)$. Then
\begin{equation}
\label{eq:hommaub}
|\mathcal{X}(\F_q)| \leq \frac{(q-1)(q^{n+1}-1)}{q(q^n-1)-n(q-1)}d.
\end{equation}
\end{thm}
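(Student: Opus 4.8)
The plan is to bound $N:=|\mathcal X(\F_q)|$ by a weighted double count over the set $\mathcal H$ of all $\F_q$-rational hyperplanes of $\mathbb P^n$, where each incidence between a rational point of $\mathcal X$ and a hyperplane is weighted by the intersection multiplicity there. Writing $I(P,H)$ for the intersection multiplicity of $\mathcal X$ with $H$ at $P$, I would estimate
\[
\Sigma:=\sum_{H\in\mathcal H}\ \sum_{P\in(H\cap\mathcal X)(\F_q)} I(P,H)
\]
from above via Bézout's theorem and from below via the osculating flag of $\mathcal X$ at each rational point, and then compare the two estimates.

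The upper estimate uses nondegeneracy: since $\mathcal X$ is contained in no hyperplane, no $H\in\mathcal H$ contains $\mathcal X$, so $H\cap\mathcal X$ is zero-dimensional and Bézout gives $\sum_{P\in H\cap\mathcal X} I(P,H)=d$ over $\overline{\F}_q$; discarding the non-rational points yields $\sum_{P\in(H\cap\mathcal X)(\F_q)} I(P,H)\le d$. As $|\mathcal H|=(q^{n+1}-1)/(q-1)$, summing over $H$ gives $\Sigma\le \frac{q^{n+1}-1}{q-1}\,d$.

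For the lower estimate I would interchange the order of summation, so that the contribution of a fixed $P\in\mathcal X(\F_q)$ is $\sum_{H\in\mathcal H,\,H\ni P} I(P,H)$. At a smooth point $P$ the osculating flag $\{P\}=L_0\subset L_1\subset\dots\subset L_{n-1}\subset\mathbb P^n$, with $\dim L_i=i$, is defined over $\F_q$; if $H$ contains $L_i$ but not $L_{i+1}$ then $I(P,H)=\epsilon_{i+1}\ge i+1$, where $0=\epsilon_0<\epsilon_1<\dots<\epsilon_n$ is the order sequence of $\mathcal X$ at $P$. The number of such $H$ is $\frac{q^{n-i}-1}{q-1}-\frac{q^{n-i-1}-1}{q-1}=q^{n-i-1}$, and as $i$ runs from $0$ to $n-1$ these classes partition the $\frac{q^n-1}{q-1}$ hyperplanes through $P$. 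Hence
\[
\sum_{H\ni P} I(P,H)\ \ge\ \sum_{i=0}^{n-1}(i+1)\,q^{n-1-i}\ =:\ S,
\]
and a routine summation gives $S=\frac{q^{n+1}-(n+1)q+n}{(q-1)^2}=\frac{q(q^n-1)-n(q-1)}{(q-1)^2}$. Combining the two estimates, $N\cdot S\le\Sigma\le\frac{q^{n+1}-1}{q-1}\,d$, which rearranges to exactly the bound \eqref{eq:hommaub}.

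The main obstacle is to make the lower estimate hold uniformly at every rational point. In positive characteristic the order sequence may be non-classical, so that $\epsilon_i>i$ for some $i$; but since the $\epsilon_i$ are strictly increasing nonnegative integers with $\epsilon_0=0$, one always has $\epsilon_{i+1}\ge i+1$, and the bound $\sum_{H\ni P}I(P,H)\ge S$ survives without assuming classicality. At a singular rational point the osculating flag is unavailable, but every hyperplane through $P$ meets $\mathcal X$ there with multiplicity at least $\mathrm{mult}_P(\mathcal X)\ge 2$, so $\sum_{H\ni P}I(P,H)\ge 2\cdot\frac{q^n-1}{q-1}$; a short verification shows this is at least $S$ for all $q\ge 2$ and $n\ge 2$, so the inequality $\sum_{H\ni P}I(P,H)\ge S$ holds at every $P\in\mathcal X(\F_q)$ and the argument applies to possibly singular $\mathcal X$. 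Verifying that the osculating spaces are genuinely $\F_q$-rational (so that the hyperplane counts $q^{n-i-1}$ are correct) and that the order-sequence description of $I(P,H)$ is valid are the technical points that require the most care.
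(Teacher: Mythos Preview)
The paper does not give its own proof of this statement: it is quoted verbatim from Homma's article \cite{homma2012bound}*{Theorem 3.2} and then applied to derive Item~1 of Theorem~\ref{main}. There is therefore nothing in the present paper to compare your argument against.

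That said, your proposal is essentially a faithful reconstruction of Homma's original proof. The double count of $\Sigma=\sum_{H}\sum_{P}I(P,H)$ over $\F_q$-rational hyperplanes, the B\'ezout upper bound $\Sigma\le d\cdot|\mathcal H|$, and the osculating-flag lower bound $\sum_{H\ni P}I(P,H)\ge\sum_{i=0}^{n-1}(i+1)q^{n-1-i}$ at each rational point are exactly the ingredients Homma uses. Your treatment of the two delicate points (non-classical order sequences handled via $\epsilon_{i+1}\ge i+1$, and singular rational points handled via $\mathrm{mult}_P(\mathcal X)\ge 2$) is correct and matches the level of care in the original. The only item you flag as needing verification, the $\F_q$-rationality of the osculating spaces at a smooth $\F_q$-rational point, is standard: these spaces are cut out by $\F_q$-rational linear conditions coming from the local expansion, hence are Frobenius-stable.
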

Here the word \emph{nondegenerate} means that $\mathcal{X}$ is not contained in any hyperplane of $\mathbb{P}^n(\F_q)$. At this point, using this result, we are ready to prove Item 1 in Theorem \ref{main}.

Indeed for a fixed value of $q$, considering equation \eqref{eq:hommaub} and dividing both sides by $d$ gives
\begin{equation}\label{eq:hommaub2}
\frac{|\mathcal{X}(\F_q)|}{d}\leq \frac{(q-1)(q^{n+1}-1)}{q(q^n-1)-n(q-1)}=\dfrac{(q-1)\dfrac{(q^{n+1}-1)}{q^{n+1}}}{\dfrac{q(q^n-1)}{q^{n+1}}-\dfrac{n(q-1)}{q^{n+1}}}.
\end{equation}
This observation can be used to improve the upper bound for $D(q)$. Note that by taking the $\limsup_{d\rightarrow \infty}{M}_q(d)/d$ as in \eqref{Dq}, we are by definition of $D(q)$ considering curves of increasing degree. However, the dimension of the projective spaces containing the curves will be increasing as $d$ increases. Indeed, if for a family of curves $(\mathcal{X}_i)_{i \ge 0}$, with degrees $d_i$ tending to infinity as $i$ tends to infinity, there exist an $n$ such that for all $i$, $\mathcal{X}_i \subseteq \mathbb{P}^n$, then $|\mathcal{X}_i(\F_q)| \le |\mathbb{P}^n(\F_q)| = (q^{n+1}-1)/(q-1)$, implying that $|\mathcal{X}_i(\F_q)|/d_i$ tends to zero as $i$ tends to infinity.

Now let $(\mathcal{X}_i)_{i \ge 0}$, be a family of curves with degrees $d_i$ tending to infinity such that $\limsup_{i \to \infty} |\mathcal{X}_i(\F_q)|/d_i >0$. Further assume for each $i$ that $\mathcal{X}_i$ is a nondegenerate curve contained in $\mathbb{P}^{n_i}$. We have seen that $n_i$ tends to infinity as $i$ tends to infinity. But then we obtain from equation \eqref{eq:hommaub2}:
\begin{equation*}
D(q) \leq \lim_{i\rightarrow\infty}\dfrac{(q-1)\dfrac{(q^{n_ i+1}-1)}{q^{n_i+1}}}{\dfrac{q(q^{n_i}-1)}{q^{n_i+1}}-\dfrac{n_i(q-1)}{q^{n_i+1}}}=q-1.
\end{equation*}

This proves Item 1 of Theorem \ref{main}.

\section{A lower bound for $D(q)$: the proof of Item 2 in Theorem \ref{main}} \label{LB}

For a prime power $q=p^e$ strictly larger than two, consider the tower of function fields $\mathcal{T}=(T_m)_{m\geq 1}$ over $\F_q$ defined recursively as
\[
T_1=\F_{q}(x_1) \qquad \mbox{and} \qquad T_{i+1}=T_i(x_{i+1}) \qquad \mbox{with} \qquad x_{i+1}^{q-1}=-1+(x_i+1)^{q-1}.
\]
The tower $\mathcal{T}$ is similar to an asymptotically good tower considered in \cite{stichtenoth2009algebraic}*{Proposition 7.3.3}, but the variation we consider is actually not asymptotically good. It is not hard to see that the place of $T_1$ corresponding to the zero of $x_1$ is totally ramified in the tower. In particular, the equation $x_{i+1}^{q-1}=-1+(x_i+1)^{q-1}$ is absolutely irreducible when viewed as a polynomial in $T_{i}[x_{i+1}].$ This implies in particular that the ideal $I_\ell:=\langle x_{2}^{q-1}+1-(x_1+1)^{q-1},\dots,x_{\ell}^{q-1}+1-(x_{\ell-1}+1)^{q-1}\rangle \subseteq \F_q[x_1,\dots,x_\ell]$ is a prime ideal. Since we want to deal with projective curves, the following proposition is essential.

\begin{prop}\label{prop:homprime}
Let $\ell>1$ be an integer and define $I'_\ell:=\langle x_{2}^{q-1}+z^{q-1}-(x_1+z)^{q-1},\dots,x_{\ell}^{q-1}+z^{q-1}-(x_{\ell-1}+z)^{q-1}\rangle \subseteq \F_q[x_1,\dots,x_\ell,z].$ Then $I'_\ell$ is a homogeneous prime ideal and the homogenization of the prime ideal $I_\ell:=\langle x_{2}^{q-1}+1-(x_1+1)^{q-1},\dots,x_{\ell}^{q-1}+1-(x_{\ell-1}+1)^{q-1}\rangle \subseteq \F_q[x_1,\dots,x_\ell]$.
\end{prop}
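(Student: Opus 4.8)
The plan is to treat the three assertions in order. Homogeneity and the identity $(I'_\ell)_* = I_\ell$ (where $I_*$ denotes the dehomogenization of an ideal $I$, i.e.\ the image of $I$ under $z \mapsto 1$) will be immediate, and the only substantive point is to show that $z$ is a non-zero-divisor modulo $I'_\ell$; once this is known, standard facts about homogenization finish everything. Write $G_i := x_i^{q-1} + z^{q-1} - (x_{i-1}+z)^{q-1}$ for $2 \le i \le \ell$, so that $I'_\ell = \langle G_2, \dots, G_\ell \rangle$. Each $G_i$ is homogeneous of degree $q-1$ (the term $x_i^{q-1}$ has degree $q-1$ and $(x_{i-1}+z)^{q-1}$ is homogeneous of degree $q-1$), hence $I'_\ell$ is a homogeneous ideal. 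Moreover the dehomogenization map $\F_q[x_1,\dots,x_\ell,z] \to \F_q[x_1,\dots,x_\ell]$, $z \mapsto 1$, is a surjective ring homomorphism carrying $G_i$ to the generator $x_i^{q-1} + 1 - (x_{i-1}+1)^{q-1}$ of $I_\ell$, so it carries the ideal $I'_\ell$ onto $I_\ell$; that is, $(I'_\ell)_* = I_\ell$.

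The key step is to prove that $z$ is a non-zero-divisor on $\F_q[x_1,\dots,x_\ell,z]/I'_\ell$. For this I would use a lexicographic monomial order with $z \prec x_1 \prec x_2 \prec \cdots \prec x_\ell$. With respect to this order the leading term of $G_i$ is $x_i^{q-1}$: the largest variable occurring in $G_i$ is $x_i$, and it occurs with exponent $q-1$ in the term $x_i^{q-1}$ and with exponent $0$ in every other term of $G_i$ (namely those coming from $z^{q-1}$ and from the expansion of $(x_{i-1}+z)^{q-1}$). The leading terms $x_2^{q-1}, \dots, x_\ell^{q-1}$ involve pairwise distinct variables, hence are pairwise coprime, so all $S$-polynomials reduce to zero (Buchberger's first criterion) and $\{G_2,\dots,G_\ell\}$ is a Gr\"obner basis of $I'_\ell$, with initial ideal $\mathrm{in}(I'_\ell) = \langle x_2^{q-1}, \dots, x_\ell^{q-1}\rangle$. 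Since this monomial ideal involves none of the variables $x_1, z$, the standard monomials exhibit $\F_q[x_1,\dots,x_\ell,z]/I'_\ell$ as a free $\F_q[x_1,z]$-module with basis $\{\, x_2^{a_2}\cdots x_\ell^{a_\ell} : 0 \le a_i \le q-2 \,\}$; being free over the domain $\F_q[x_1,z]$ it is torsion-free, and as $z \ne 0$ in $\F_q[x_1,z]$ multiplication by $z$ on the quotient is injective. (Equivalently, a nonzero polynomial in normal form with respect to this Gr\"obner basis stays in normal form, and nonzero, after multiplication by $z$, hence cannot lie in $I'_\ell$.)

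Two standard facts now complete the argument. First, a homogeneous ideal $J \subseteq \F_q[x_1,\dots,x_\ell,z]$ on which $z$ is a non-zero-divisor coincides with the homogenization $(J_*)^h$ of its dehomogenization; applying this to $J = I'_\ell$ and using $(I'_\ell)_* = I_\ell$ shows that $I'_\ell$ is the homogenization of $I_\ell$. Second, the homogenization of a prime ideal is prime: inverting $z$ in $\F_q[x_1,\dots,x_\ell,z]/I_\ell^h$ and substituting $u_j = x_j/z$ identifies the localization with $(\F_q[x_1,\dots,x_\ell]/I_\ell)[z,z^{-1}]$, which is a domain because $I_\ell$ is prime; since $z$ is a non-zero-divisor, $\F_q[x_1,\dots,x_\ell,z]/I_\ell^h$ embeds into this localization and is therefore a domain. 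Hence $I'_\ell = I_\ell^h$ is a homogeneous prime ideal, and it is the homogenization of $I_\ell$, as claimed.

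I expect the only non-formal point to be the observation that $\{G_2,\dots,G_\ell\}$ is already a Gr\"obner basis, which rests on the triangular shape of the defining relations — so that $x_i^{q-1}$ is the leading term of $G_i$ and the leading terms are pairwise coprime — and which is precisely what makes the non-zero-divisor property of $z$ transparent. The remaining steps reflect the general principle that, although the homogenization of an ideal need not be generated by the homogenizations of a chosen generating set, this discrepancy is controlled exactly by whether the homogenizing variable is a non-zero-divisor modulo the candidate ideal.
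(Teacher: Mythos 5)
Your proof is correct, and while it rests on the same structural observation as the paper's --- the triangular shape of the relations forces the leading terms of the generators to be pairwise coprime, so Buchberger's first criterion applies --- the mechanism by which you exploit it is genuinely different. The paper works downstairs: it equips $\F_q[x_1,\dots,x_\ell]$ with a degree-lexicographic order, notes that the affine generators $g_i$ form a Gr\"obner basis of $I_\ell$, and then invokes the theorem of Cox--Little--O'Shea that homogenizing a Gr\"obner basis with respect to a \emph{graded} order yields a Gr\"obner basis, hence a generating set, of the homogenized ideal; the fact that the homogenization of a prime ideal is prime is left implicit. You instead work upstairs: you compute a Gr\"obner basis of $I'_\ell$ itself with a lex order, read off from the initial ideal that the quotient is a free $\F_q[x_1,z]$-module and hence that $z$ is a non-zero-divisor modulo $I'_\ell$, and then combine the saturation identity $(J_*)^h=(J:z^\infty)$ with an explicit localization argument to get both $I'_\ell=I_\ell^h$ and primality. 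Your route is more self-contained (it proves the ``homogenization of a prime is prime'' step rather than citing it, and the non-zero-divisor criterion isolates exactly why the naive homogenization of the generators gives the correct ideal here), at the cost of being longer than the paper's two-line citation. One minor remark: primality of $I_\ell$ itself is an external input in both arguments; the paper justifies it beforehand via the total ramification of the zero of $x_1$ in the tower, and your proof correctly takes it as given from the statement.
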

\begin{proof}
For convenience, let us write $g_i:=x_{i+1}^{q-1}+1-(x_i+1)^{q-1}$ and $g_i':=x_{i+1}^{q-1}+z^{q-1}-(x_i+z)^{q-1}.$ We have already seen that the ideal $I_\ell$ is a prime ideal. Now let $>_\mathrm{deglex}$ denote the degree-lexicographic ordering with $x_\ell>_\mathrm{deglex} \ldots >_\mathrm{deglex} x_1 $ as a monomial order in $\F_q[x_1,\ldots,x_\ell]$. Since under this monomial ordering the leading terms of the $g_i$ are co-prime, the set $\{g_1,\dots,g_{\ell-1}\}$ is a Gr\"obner basis of $I_\ell.$
Then from \cite{CLO}*{$\S$ 8.4, Theorem 4} $\{g_1',\ldots, g_{\ell-1}'\}$ is a Gr\"obner basis for the homogenization of $I_\ell$. Hence $I_\ell'$ is the homogenization of the prime ideal $I_\ell$ and in particular $I_\ell'$ is a homogeneous prime ideal.
\end{proof}

Now consider the projective curve $\mathcal{X}_\ell \subset \mathbb{P}^{\ell}$ defined over $\F_q$ given by the homogeneous equations
\begin{equation}\label{eq:DGge1}
x_{i+1}^{q-1}=-z^{q-1}+(x_i+z)^{q-1} \quad \text{for $i=1,\ldots,\ell-1$.}
\end{equation}
Proposition \ref{prop:homprime} implies that $\mathcal{X}_\ell \subset \mathbb{P}^\ell$ is indeed an irreducible projective curve. It actually implies that $\mathcal{X}_\ell$ is a complete intersection, which in turn implies that $\deg(\mathcal{X}_\ell)=\deg(g_1')\cdots \deg(g_{\ell-1}')=(q-1)^{\ell-1}.$

Now we consider the number of $\F_q$-rational points on $\mathcal{X}_\ell$. To estimate this number, we consider the number of projective points $[x_1:x_2:\cdots :x_\ell:0]$ satisfying equation \eqref{eq:DGge1}. Substituting $z=0$ in equation \eqref{eq:DGge1}, we obtain that
\[
x_{i+1}^{q-1}=x_{i}^{q-1} \quad \text{for $i=1,\ldots,\ell-1$.}
\]
Choosing $x_1=1$, we see that any solution is defined over $\F_q$ and that there are exactly $(q-1)^{\ell-1}$ points at the infinity on $\mathcal{X}_{\ell}$. In particular, $|\mathcal{X}_\ell(\F_q)| \ge (q-1)^{\ell-1}.$ Hence
\begin{equation*}
D(q)\geq \limsup_{\ell\rightarrow \infty}\frac{|\mathcal{X}_{\ell}(\F_{q})|}{\deg(\mathcal{X}_{\ell})}\geq \frac{(q-1)^{\ell-1}}{(q-1)^{\ell-1}}=1.
\end{equation*}
This completes the proof of Item 2 of Theorem \ref{main}.

\section{A lower bound for $D(q^2)$: the proof of Item 3 in Theorem \ref{main}} \label{Secq2}

In order to prove Item 3 in  Theorem \ref{main} we use a tower of function fields over $\F_{q^2}$ constructed recursively by Garcia and Stichtenoth in \cite{garcia1996asymptotic} as follows:
\[
F_1=\F_{q^2}(x_1) \qquad \mbox{and} \qquad F_{i+1}=F_i(x_{i+1}) \qquad \mbox{with} \qquad x_{i+1}^q+x_{i+1}=\frac{x_i^q}{x_i^{q-1}+1}.
\]
This tower is optimal in the sense that if $N_1(F_i)$ denotes the number of rational places and $g(F_i)$ the genus of $F_i$, then $\lim_ {m \rightarrow \infty} N_1(F_m)/g(F_m)=q-1=A(q^2)$.

Indeed, any zero of the function $x_1-\alpha$ in $F_1$ for $\alpha \in \F_{q^2} \setminus \{\alpha \mid \alpha^q+\alpha=0\}$ splits completely in the extension $F_m/F_1$, implying that $N_1(F_m) \ge (q-1)q^m$. Moreover, in \cite[Remark 3.8]{garcia1996asymptotic}, the genus $g(F_m)$ of $F_m$ is computed for all $m \geq 1$. It is given by
\[
g(F_m)=\begin{cases}
(q^{m/2}-1)^2 & \mbox{if} \quad m\equiv 0 \pmod 2, \\
(q^{\frac{m+1}{2}}-1)(q^{\frac{m-1}{2}}-1) & \mbox{if} \quad m \equiv 1 \pmod 2.
\end{cases}
\]
Hence optimality of the tower follows.
For computing the genus $g(F_m)$, it is proven that the pole $P_{\infty}$ of $x_1\in F_1$ is totally ramified in all extensions $F_m/F_1$, $m\geq 2$, see also \cite[Proposition 1.1]{pellikaan1998appeared}. We denote by $P_{\infty}^{(m)}$ the unique extension of $P_{\infty}$ in $F_m$. Note that $P_{\infty}^{(m)}$ is a rational place, since  $P_{\infty}$ is totally ramified in $F_m/F_1$.

Even though it is in general a difficult challenge to compute the Weierstrass semigroups at places in a tower, Pellikaan, Stichtenoth, and Torres \cite{pellikaan1998appeared} computed the Weierstrass semigroup at $P_{\infty}^{(m)}$ for all $m \geq 1$. The nice property proven by the authors in \cite{pellikaan1998appeared} is that the semigroups at $P_{\infty}^{(m)}$ can be computed from the one at $P_{\infty}^{(m-1)}$, following a recursive procedure. Indeed from \cite[Theorem 3.1]{pellikaan1998appeared}
\begin{equation}
\label{eq:Wsemigroup}
H(P_{\infty}^{(m)})=\begin{cases}
\mathbb{Z}_{\geq 0} & \mbox{if} \quad m=1\\
qH(P_{\infty}^{(m-1)}) \cup \mathbb{Z}_{\geq c_m} & \mbox{if} \quad m>1
\end{cases}
\end{equation}
where $c_m:=q^m-q^{\lceil \frac{m}{2}\rceil}$ is the conductor of $H(P_{\infty}^{(m)})$.

Let $\{\gamma_1,\ldots,\gamma_{\ell}\}$ be a set of generators of $H(P_{\infty}^{(m)})$, so that
\begin{equation*} 
H(P_{\infty}^{(m)})=\langle \gamma_1,\ldots,\gamma_{\ell}\rangle,
\end{equation*}
and $0<\gamma_1<\cdots <\gamma_{\ell}$. 
Note that equation \eqref{eq:Wsemigroup} implies that $\gamma_1=q^{m-1}$, being the smallest positive element of $H(P_\infty^{(m)})$. This implies that $H(P_\infty^{(m)}) \cap \mathbb{Z}_{< c_m+q^{m-1}}$ is a generating set and that therefore we may assume that
\begin{equation}\label{eq:sizegammaell}
\gamma_\ell \le c_m+q^{m-1}-1.
\end{equation}

By definition of the Weierstrass semigroup $H(P_{\infty}^{(m)})$, there exist functions $f_1,\ldots,f_\ell \in F_m$ such that
$$(f_i)_{\infty}=\gamma_iP_{\infty}^{(m)}, \ i=1,\ldots,\ell.$$
In \cite{saints_heegard_1995}, the functions $f_1,\dots,f_\ell$ are used to define a birational morphism between a nonsingular projective curve $\mathcal{X}$ and a curve $\mathcal{X}'$, with only one point at infinity. Since we use the language of function fields, we need to reformulate the results from \cite{saints_heegard_1995} slightly. Intuitively, we simply use the functions $f_1,\dots,f_n$ to define a map from the set of places of $F_m$ to an algebraic curve $\mathcal{X}_m$. However, this map, which we denote by $\varphi_{m}$, is easiest to describe when first extending the constant field of $F_m$ to $\overline{\F}_q$, the algebraic closure of $ \F_q$, since then all places are rational:
\[
\varphi_{m}: \mathbb{P}(\overline{\F}_qF_m)  \longrightarrow  \mathbb{P}^\ell
\]
defined by
\[
\begin{array}{rcll}
\varphi_{m}(Q) & = &  [1:f_1(Q):\cdots :f_{\ell}(Q)], & \text{ if } \ Q\neq P_\infty^{(m)},\\
\varphi_{m}(Q) & = & [0:\dots:0:1], & \text{ otherwise.} 
\end{array}
\]
Note that \cite{goldschmidt2006algebraic}*{Theorem 4.2.2} implies that indeed the image of the map $\varphi_m$ is a projective curve $\mathcal{X}_m$.
Since $f_1,\dots,f_\ell$ are defined over $\F_{q^2}$, so is $\mathcal{X}_m$. Therefore we will from now on consider the curve $\mathcal{X}_m$ as a curve defined over $\F_{q^2}$.
Moreover, \cite[Theorem 15]{saints_heegard_1995} states among other things that the function field of $\mathcal{X}_m$, when considered over the field $\F_{q^2}$, is exactly $F_m$, that apart from possibly $\varphi_m(P_\infty^{(m)})$, the curve has no singularities and that $P_\infty^{(m)}$ is the only place of $F_m$ centered at $\varphi_m(P_\infty^{(m)})$.
In particular $\varphi_m$ induces a bijection between $\mathbb{P}(\overline{\F}_qF_m) \setminus \{P_\infty^{(m)}\}$ and $\mathcal{X}_m \setminus \{\varphi_m(P_\infty^{(m)})\}$.

\begin{rem}
\label{rem:nondegenerate}
The curve $\mathcal{X}_m$ is a non-degenerate curve in $\mathbb{P}^{\ell}$. Indeed if this was not the case, then there would exist a combination
$a_1+a_2f_1+\cdots + a_{\ell +1}f_{\ell}$, for some $a_i\in \overline{\F}_q$ not all equal to zero, such that $a_1+a_2f_1+\cdots + a_{\ell +1}f_{\ell}\equiv 0$, which is impossible by the linear independence of $\{1,f_1,\ldots,f_{\ell}\}$ over $\overline{\F}_q$ given by \cite{stichtenoth2009algebraic}*{Proposition 3.6.1}.
\end{rem}

Now we investigate the degree and number of $\F_{q^2}$-rational points on $\mathcal{X}_m$. The number of rational points is easy to bound, since the rational places of $F_m$ are in bijection with the points on $\mathcal{X}_m$ defined over $\F_{q^2}$. Indeed, the place $P_\infty^{(m)}$ corresponds to the projective point $[0:\dots:0:1]$, while the remaining rational points of $\mathcal{X}_m$ are non-singular and hence each corresponds to a unique rational place of $F_m$. This shows that 
\begin{equation}\label{eq:ptsXm}
|\mathcal{X}_m(\F_{q^2})| =N_1(F_m) \ge (q-1)q^m.
\end{equation}
The inequality $N_1(F_m) \ge (q-1)q^m$ was already mentioned before.

At this point we need to derive some information on the degree $\deg(\mathcal{X}_m)$ of the curve $\mathcal{X}_m$. The following inequality holds:
\begin{equation}\label{eq:degXm}
\deg(\mathcal{X}_m)\leq \gamma_{\ell} \leq c_m+q^{m-1}-1.
\end{equation}
This can be proven as follows. First of all, the last inequality is simply equation \eqref{eq:sizegammaell}. Now recall that the degree can also be seen as the the maximum number of intersection points with a hyperplane. The points of intersection of the curve $\mathcal{X}_m$ and a hyperplane of equation $a_0x_0+\cdots+ a_{\ell}x_{\ell}=0$ in $\mathbb{P}^{\ell}$ correspond, by the definition of $\varphi_m$, to the places that are zeros of the function $\sum_{i=0}^{\ell}a_i f_i\in \mathscr{L}(\gamma_{\ell}\bar{P}_{\infty}^{(m)})$. Here $\mathscr{L}(\gamma_{\ell}\bar{P}_{\infty}^{(m)})$ denotes the Riemann--Roch space of the divisor $\gamma_{\ell}\bar{P}_{\infty}^{(m)}$. Since the pole divisor of $\sum_{i=0}^{\ell}a_i f_i$ has degree at most $\gamma_\ell$ the same is true for its zero divisor. Hence the number of intersection points is at most $\gamma_{\ell}$.

Now combining equations \eqref{eq:ptsXm} and \eqref{eq:degXm}, we obtain:
\begin{equation*}
D(q^2)\geq \limsup_{m\rightarrow \infty}\frac{|\mathcal{X}_m(\F_{q^2})|}{\mathrm{deg}(\mathcal{X}_m)} \ge \limsup_{m \rightarrow \infty} \frac{(q-1)q^m}{c_m+q^{m-1}-1}=\frac{q^2-q}{q+1}.
\end{equation*}
Since $A(q^2)=q-1$, Item 3 of Theorem \ref{main} follows. 

\begin{rem}

Thereom \ref{main} (3) improves Homma's lower bound $D(q^2) \geq A(q^2)/2$ for any values of $q$. 
The bound $D(q)\ge 1$ is instead interesting for small values of $q>2$, since then Homma's lower bound $D(q) \ge A(q)/2$ is weaker. 
The following table provides for those small values of $q$ the best known lower bound for $A(q)/2$. For all other values of $q$, except possibly when $q$ is a prime, $A(q) \geq 2$.
\end{rem}

\begin{center}
\begin{tabular}{c| c |c }
$q$ & $A(q)/2 \geq$  & reference\\ \hline
$3$ & $0.2464$ &  \cite{DM13}\\
$4$ & $0.5$ &  \cite{Ihara,TVZ} \\
$5$ & $0.3636$ & \cite{Tem01,AM02} \\
$7$ & $0.4615$ & \cite{HS13}\\
$8$ & $0.75$ & \cite{Zink}\\
$11$ & $0.5714$ & \cite{HS13}\\
$13$ & $0.6$ & \cite{LM02}\\
$17$ & $0.8$ & \cite{LM02}\\
$19$ & $0.8$ & \cite{HS13}\\
$23$ & $0.9230$ & \cite{HS13}\\
$29$ & $0.9523$ & \cite{HS13}\\
$31$ & $0.9523$ & \cite{HS13}\\
\end{tabular}
\end{center}

\section*{Acknowledgements}

The first and second authors would like to acknowledge the support from The Danish Council for Independent Research (DFF-FNU) for the project \emph{Correcting on a Curve}, Grant No.~8021-00030B.

\begin{bibdiv}
  \begin{biblist}

 \bib{AM02}{article}{
 author={Angels, C.},
author={Marie, C.},
TITLE = {A note on tamely ramified towers of global function fields},
   JOURNAL = { Finite Fields Appl.},
    VOLUME = {8},
      YEAR = {2002},
     PAGES = {207--215}
 }

 \bib{BBGS}{article}{
 author={Bassa, A.},
author={Beelen, P.},
author={Garcia, A.},
author={Stichtenoth, H.},
TITLE = {Towers of function fields over non-prime finite fields},
   JOURNAL = {Mosc. Math. J.},
    VOLUME = {15},
      YEAR = {2015},
    NUMBER = {1},
     PAGES = {1--29}
 }

 \bib{CLO}{article}{
 author={Cox, D.},
  author={Little, J.},
  author={O'Shea, D.},
 title={Ideals, Varieties, and Algorithms
An Introduction to Computational Algebraic Geometry and Commutative Algebra
},
  journal={Undergraduate Text in Math, Springer-Verlag},
  year={1991}
 }

 \bib{DV}{article}{
 author={Drinfeld, V. G.},
  author={Vl\u{a}du\c{t}, S. G.},
 title={The number of points of an algebraic curve},
  journal={Funktsional. Anal. i Prilozhen.},
  volume={17},
  pages={68--69},
  year={1983}
 }

 \bib{DM13}{article}{
 author={Duursma, I.},
  author={Mak, K. H.},
 title={On lower bounds for tha constants $A(2)$ and $A(3)$},
  journal={Composition math.},
  volume={149},
  pages={1108--1128},
  year={2013}
 }


\bib{garcia1996asymptotic}{article}{
  title={On the asymptotic behaviour of some towers of function fields over finite fields},
  author={Garcia, A.},
  author={Stichtenoth, H.},
  journal={Journal of number theory},
  volume={61},
  number={2},
  pages={248--273},
  year={1996},
  publisher={Academic Press}
}

\bib{goldschmidt2006algebraic}{book}{
  title={Algebraic functions and projective curves},
  author={Goldschmidt, D.},
  volume={215},
  year={2006},
  publisher={Springer Science \& Business Media}
}

\bib{HS13}{article}{
 author={Hall-Seelig, L. L.}, 
 title={New lower bounds for the Ihara function $A(q)$ for small primes},
  journal={J. Number Theory},
  volume={133},
  pages={3319--3324},
  year={2013}
 }

\bib{homma2012bound}{incollection}{
  title={A bound on the number of points of a curve in a projective space over a finite field},
  author={Homma, M.},
  booktitle={Theory and applications of finite fields},
  volume={579},
  pages={103--110},
  year={2012},
  publisher={Amer. Math. Soc. Providence}
}

 \bib{H1}{article}{
 author={Homma, M.},
 author={Kim, S. J,},
 title={Around Sziklai’s conjecture on the number of points of a plane curve over a finite field},
  journal={Finite Fields Appl. },
  volume={15},
  pages={468--474},
  year={2009}
 }

 \bib{H2}{article}{
 author={Homma, M.},
 author={Kim, S. J,},
 title={Sziklai’s conjecture on the number of points of a plane curve over a finite field II},
  journal={in: G. McGuire, G.L. Mullen, D. Panario, I.E. Shparlinski (Eds.), Finite Fields: Theory and Applications, in: Contemp. Math.},
  volume={518},
  pages={225--234},
  year={2010}
 }

 \bib{H3}{article}{
 author={Homma, M.},
 author={Kim, S. J,},
 title={Sziklai’s conjecture on the number of points of a plane curve over a finite field III},
  journal={Finite Fields Appl.},
  volume={16},
  pages={315--319},
  year={2010}
 }

 \bib{Ihara}{article}{
 author={Ihara, Y.},
 title={Some remarks on the number of rational points of algebraic curves over finite fields},
  journal={J. Fac. Sci. Univ. Tokyo Sect. IA Math.},
  volume={28},
  pages={721--724},
  year={1981}
 }

 \bib{LM02}{article}{
 author={Li, W. C. W.},
 author={Maharaj, H.},
 title={Coverings of curves with asymptotically many rational points},
  journal={ J. Number Theory},
  volume={96},
  pages={232--256},
  year={2002}
 }

 \bib{pellikaan1998appeared}{article}{
 author={Pellikaan, R.},
  author={Stichtenoth, H.},
   author={Torres, F.},
 title={Weierstrass semigroups in an asymptotically good tower of function fields},
  journal={Finite fields and their applications},
  volume={4},
  pages={381--392},
  year={1998}
 }

\bib{saints_heegard_1995}{article}{    
AUTHOR = {Saints, K.}
author = {Heegard, C.},
TITLE = {Algebraic-geometric codes and multidimensional cyclic codes: a unified theory and algorithms for decoding using {G}r\"{o}bner bases},
JOURNAL = {IEEE Trans. Inform. Theory},
VOLUME = {41},
NUMBER = {6, part 1},
PAGES = {1733--1751},
YEAR = {1995}
}

 \bib{Serre}{article}{
 author={Serre, J.-P.},
 title={Sur le nombre des points rationnels d’une courbe algébrique sur un corps fini},
  journal={C.R. Acad. Sci. Paris Sér. I Math.},
  volume={296},
  pages={397--402},
  year={1983}
 }

\bib{stichtenoth2009algebraic}{book}{
  title={Algebraic function fields and codes},
  author={Stichtenoth, H.},
  volume={254},
  year={2009},
  publisher={Springer Science \& Business Media}
}

 \bib{SZI}{article}{
 author={Sziklai, P.},
 title={A bound on the number of points of a plane curve},
  journal={Finite Fields Appl.},
  volume={14},
  pages={41--43},
  year={2008}
 }

 \bib{Tem01}{article}{
 author={Temkine, A.},
 title={Hilbert class field towers of function fields over finte fields and lower bounds for $A(q)$},
  journal={J. Number Theory},
  volume={87},
  pages={189--210},
  year={2001}
 }

 \bib{TVZ}{article}{
 author={Tsfasman, M. A.},
author={Vl\u{a}du\c{t}, S. G.},
author={Zink, T.},
 title={Modular curves, Shimura curves, and Goppa codes, better than Varshamov-Gilbert bound},
  journal={Math. Nachr.},
  volume={109},
  pages={21--28},
  year={1982}
 }

 \bib{Zink}{article}{
author={Zink, T.},
 title={Degeneration of Shimura surfaces and a problem in coding theory},
  journal={ Lect. Notes in Comput. Sci.},
  volume={199},
  pages={503--511},
  year={1985}
 }

  \end{biblist}
\end{bibdiv}

\end{document}